\newcommand\cL{{\mathcal L}}
\theoremstyle{plain}
\newtheorem{theorem}{Theorem}[section]
\newtheorem{properties}[theorem]{Properties}
\newtheorem{lemma}[theorem]{Lemma}
\newtheorem*{main thm}{Main Theorem}
\theoremstyle{definition}
\newtheorem{definition}[theorem]{Definition}
\newtheorem{example}[theorem]{Example}
\newtheorem{remark}[theorem]{Remark}
\def\@addpunct#1{%
  \relax\ifhmode
    \ifnum\spacefactor>\@m \else#1\fi
  \fi}
\newcommand{\keywordsname}{$^*$Correspondence:}
\def\@setkeywords{%
  {\keywordsname}\enspace \@keywords\@addpunct.}
\def\keywords#1{\def\@keywords{#1}}
\let\@keywords=\@empty
\g@addto@macro{\maketitle}{\begingroup%
  \let\@makefnmark\relax  \let\@thefnmark\relax%
  \ifx\@keywords\@mpty\else\@footnotetext{\@setkeywords}\fi%
  \endgroup}
\date{}
\title{\large{Integral laminations on non-orientable surfaces}}
\author{\footnotesize{S.\"{O}yk\"{u} YURTTA\c{S}$^{\scriptstyle{1,*}}$, Mehmetcik PAMUK$^{\scriptstyle{1}}$}\\
$\scriptstyle{1}$ \footnotesize{Department of Mathematics, Dicle University, 21280
D\.{ı}yarbak{\i}r, Turkey}\\ \small{saadet.yurttas@dicle.edu.tr}\\
$\scriptstyle{2}$ \footnotesize{Department of Mathematics, Middle East Technical University,
Ankara, Turkey}\\ \footnotesize{mpamuk@metu.edu.tr}}
\keywords{saadet.yurttas@dicle.edu.tr}
\begin{document}
\maketitle

\begin{abstract}

We describe triangle coordinates for integral laminations on a non-orientable surface $N_{k,n}$ of genus $k$ with $n$ punctures and one boundary component, and give an explicit bijection from the set of integral laminations on $N_{k,n}$ to $(\mathbb{Z}^{2(n+k-2)}\times \mathbb{Z}^k)\setminus \left\{0\right\}$. 
 \end{abstract}

\noindent \small{\textbf{Keywords:} non-orientable surfaces, triangle coordinates, Dynnikov coordinates\\  AMS \it{Mathematics Subject classification}: 57N05, 57N16, 57M50}

\section{Introduction}
Let $N_{k, n}$ be a non-orientable surface  of genus $k$ with $n$ punctures and one boundary component. In this paper  we shall describe the generalized Dynnikov coordinate system for  the set of integral laminations $\mathcal{L}_{k,n}$, and give an explicit bijection between $\mathcal{L}_{k,n}$ and $(\mathbb{Z}^{2(n+k-2)}\times \mathbb{Z}^k)\setminus \left\{0\right\}$. To be more specific, we shall first take a particular collection of 
$3n+2k-4$ arcs and $k$ curves embedded in $N_{k,n}$, and describe each integral lamination by an element of $\mathbb{Z}_{\geq 0}^{3n+2k-4}\times \mathbb{Z}^k$,  its geometric intersection numbers with these arcs and curves. \emph{Generalized Dynnikov coordinates} are certain linear combinations of these integers that provide a one-to-one correspondence between $\mathcal{L}_{k,n}$ and $(\mathbb{Z}^{2(n+k-2)}\times \mathbb{Z}^k)\setminus \left\{0\right\}$.

The motivation for this paper comes from a recent work of Papadopoulos and Penner \cite{pepa} where they provide analogues for non-orientable surfaces of several results from Thurston theory of surfaces  which were studied only for orientable surfaces before \cite{W88, FLP79}.  Here we shall give the analogy of the Dynnikov Coordinate System \cite{D02} on the finitely punctured disk which has several useful applications such as giving an efficient method for  the solution of the word problem of the $n$-braid group \cite{or08},  computing the geometric intersection number of integral laminations \cite{paper2}, and counting the number of components they contain \cite{HY16}.

	Throughout the text we shall work on a standard model of  $N_{k, n}$ as illustrated in Figure \ref{arcsproof} where a disc with a cross drawn within it represents a crosscap, that is the interior of the disc is removed and the antipodal points on the resulting boundary component are identified (i.e. the boundary component bounds a M\"{o}bius band). 
	
The structure of the paper is as follows. In Section \ref{terminology} we give the necessary terminology and background. In Section~$2$ we describe and study the triangle coordinates for integral laminations on $N_{k, n}$, and  construct the generalized Dynnikov Coordinate System  giving the bijection $\rho \colon \cL_{k,n}\to(\mathbb{Z}^{2(n+k-2)}\times \mathbb{Z}^k)\setminus \left\{0\right\}$. An explicit formula for the inverse of this bijection is given in Theorem~\ref{thm:dynninvert}.

\subsection{Basic terminology and background}\label{terminology}
A simple closed curve in $N_{k,n}$  is inessential if it bounds an unpunctured disk, once punctured disk, or an unpunctured annulus. It is called essential otherwise. A simple closed curve is called \emph{2-sided} (respectively \emph{1-sided}) if a regular neighborhood of the curve is an annulus (respectively M\"{o}bius band). We say that a 2-sided curve is \emph{non-primitive} if it bounds a M\"{o}bius band 
\cite{pepa}, and a 1-sided curve is \emph{non-primitive} if it is a core curve of a M\"{o}bius band. They are called \emph{primitive} otherwise.

An integral lamination $\mathcal{L}$ on $N_{k, n}$  is a disjoint union of finitely many essential simple closed curves in $N_{k, n}$  modulo isotopy. Let $\mathcal{A}_{k,n}$ be the set of arcs $\alpha_i~(1\leq i\leq 2n-2)$,  $\beta_i~(1\leq i\leq n+k-1)$, $\gamma_i~(1\leq i\leq k-1)$ which have 
each endpoint either on the boundary or at a puncture, and the curves $c_i$~($1\leq i \leq k$) which are the core curves of M\"{o}bius bands in $N_{k,n}$ as illustrated in Figure \ref{arcsproof}:  the arcs $\alpha_{2i-3}$ and  $\alpha_{2i-2}$ for $2\leq i\leq n$  join the $i$-th puncture 
to $\partial N_{k,n}$, the arc $\beta_i$ has both end points on $\partial N_{k,n}$ and passes between the $i$-th and $(i+1)$-st  punctures for $1\leq i\leq n-1$,  the $n$-th puncture and the first crosscap for $i=n$, and the $(i-n)$-th and $(i+1-n)$-th  crosscaps for  $n+1\leq i\leq n+k-1$.  The arc $\gamma_i$ ( $1\leq i\leq k-1$) has both endpoints on $\partial N_{k,n}$ and surrounds the $i$-th crosscap.  
\begin{figure}[h!]
\begin{center}
\psfrag{c1}[tl]{$\scriptstyle{c_1}$} 
\psfrag{c2}[tl]{$\scriptstyle{c_2}$} 
\psfrag{ci}[tl]{$\scriptstyle{c_{i}}$} 
\psfrag{ck}[tl]{$\scriptstyle{c_k}$} 
\psfrag{gi}[tl]{$\scriptstyle{\gamma_i}$} 
\psfrag{a1}[tl]{$\scriptstyle{\alpha_1}$} 
\psfrag{g1}[tl]{$\scriptstyle{\gamma_1}$} 
\psfrag{g2}[tl]{$\scriptstyle{\gamma_2}$} 
\psfrag{g3}[tl]{$\scriptstyle{\gamma_{k-1}}$} 
\psfrag{d}[tl]{$\scriptstyle{\partial(N_{1,n})}$} 
\psfrag{w}[tl]{$\scriptstyle{w}$} 
\psfrag{i+1}[tl]{$\scriptstyle{i}$} 
\psfrag{i}[tl]{$\scriptstyle{i-1}$} 
\psfrag{n}[tl]{$\scriptstyle{n}$} 
\psfrag{1}[tl]{$\scriptstyle{\alpha_1}$} 
\psfrag{b3}[tl]{\begin{turn}{-90}$\scriptstyle{\textcolor{red}{\beta_{n}}}$\end{turn}} 
\psfrag{b7}[tl]{\begin{turn}{-90}$\scriptstyle{\textcolor{red}{\beta_{n+k-1}}}$\end{turn}} 
\psfrag{b1}[tl]{\begin{turn}{-90}$\scriptstyle{\textcolor{red}{\beta_{1}}}$\end{turn}} 
\psfrag{b4}[tl]{\begin{turn}{-90}$\scriptstyle{\textcolor{red}{\beta_{n+1}}}$\end{turn}} 
\psfrag{bi}[tl]{\begin{turn}{-90}$\scriptstyle{\textcolor{red}{\beta_{i}}}$\end{turn}} 
\psfrag{bi-1}[tl]{\begin{turn}{-90}$\scriptstyle{\textcolor{red}{\beta_{i-1}}}$\end{turn}} 
\psfrag{b6}[tl]{\begin{turn}{-90}$\scriptstyle{\textcolor{red}{\beta_{n+i}}}$\end{turn}} 
\psfrag{b5}[tl]{\begin{turn}{-90}$\scriptstyle{\textcolor{red}{\beta_{n+i-1}}}$\end{turn}} 
\psfrag{2i}[tl]{$\scriptstyle{\alpha_{2i}}$}
\psfrag{2}[tl]{$\scriptstyle{\alpha_2}$} 
\psfrag{y}[tl]{$\scriptstyle{y}$}
\psfrag{2i-5}[tl]{$\scriptstyle{\alpha_{2i-3}}$}
\psfrag{2i-2}[tl]{$\scriptstyle{\alpha_{2i}}$}
\psfrag{2i-3}[tl]{$\scriptstyle{\alpha_{2i-1}}$}
\psfrag{2i-1}[tl]{$\scriptstyle{\alpha_{2i-1}}$}
\psfrag{2i-4}[tl]{$\scriptstyle{\alpha_{2i-2}}$}
\psfrag{2n-4}[tl]{$\scriptstyle{\alpha_{2n-2}}$} 
\psfrag{2n-5}[tl]{$\scriptstyle{\alpha_{2n-3}}$} 
\psfrag{dip}[tl]{\begin{turn}{-90}$\scriptscriptstyle{\textcolor{blue}{\Delta'_i}}$\end{turn}} 
\psfrag{d1}[tl]{\begin{turn}{-90}$\scriptscriptstyle{\textcolor{blue}{\Delta_1}}$\end{turn}} 
\psfrag{d1p}[tl]{\begin{turn}{-90}$\scriptscriptstyle{\textcolor{blue}{\Delta'_1}}$\end{turn}} 
\psfrag{d2p}[tl]{\begin{turn}{-90}$\scriptscriptstyle{\textcolor{blue}{\Delta'_2}}$\end{turn}} 
\psfrag{d3p}[tl]{\begin{turn}{-90}$\scriptscriptstyle{\textcolor{blue}{\Delta'_{k-1}}}$\end{turn}} 
\psfrag{d4p}[tl]{\begin{turn}{-90}$\scriptscriptstyle{\textcolor{blue}{\Delta'_{k}}}$\end{turn}} 
\psfrag{d2i-2}[tl]{\begin{turn}{-90}$\scriptscriptstyle{\textcolor{blue}{\Delta_{2i}}}$\end{turn}} 
\psfrag{d2i-1}[tl]{\begin{turn}{-90}$\scriptscriptstyle{\textcolor{blue}{\Delta_{2i-1}}}$\end{turn}} 
\psfrag{d2n-4}[tl]{\begin{turn}{-90}$\scriptscriptstyle{\textcolor{blue}{\Delta_{2n-4}}}$\end{turn}} 
\psfrag{d2n-3}[tl]{\begin{turn}{-90}$\scriptscriptstyle{\textcolor{blue}{\Delta_{2n-2}}}$\end{turn}} 
\psfrag{d2i-4}[tl]{\begin{turn}{-90}$\scriptscriptstyle{\textcolor{blue}{\Delta_{2i-2}}}$\end{turn}} 
\psfrag{d2i-3}[tl]{\begin{turn}{-90}$\scriptscriptstyle{\textcolor{blue}{\Delta_{2i-1}}}$\end{turn}} 
\psfrag{d2i-5}[tl]{\begin{turn}{-90}$\scriptscriptstyle{\textcolor{blue}{\Delta_{2i-3}}}$\end{turn}} 
\psfrag{d0}[tl]{\begin{turn}{-90}$\scriptscriptstyle{\textcolor{blue}{\Delta_{0}}}$\end{turn}} 
\psfrag{o1}[tl]{\begin{turn}{-90}$\scriptscriptstyle{\textcolor{blue}{\Sigma_1}}$\end{turn}} 
\psfrag{oi}[tl]{\begin{turn}{-90}$\scriptscriptstyle{\textcolor{blue}{\Sigma_i}}$\end{turn}} 
\includegraphics[width=0.97\textwidth]{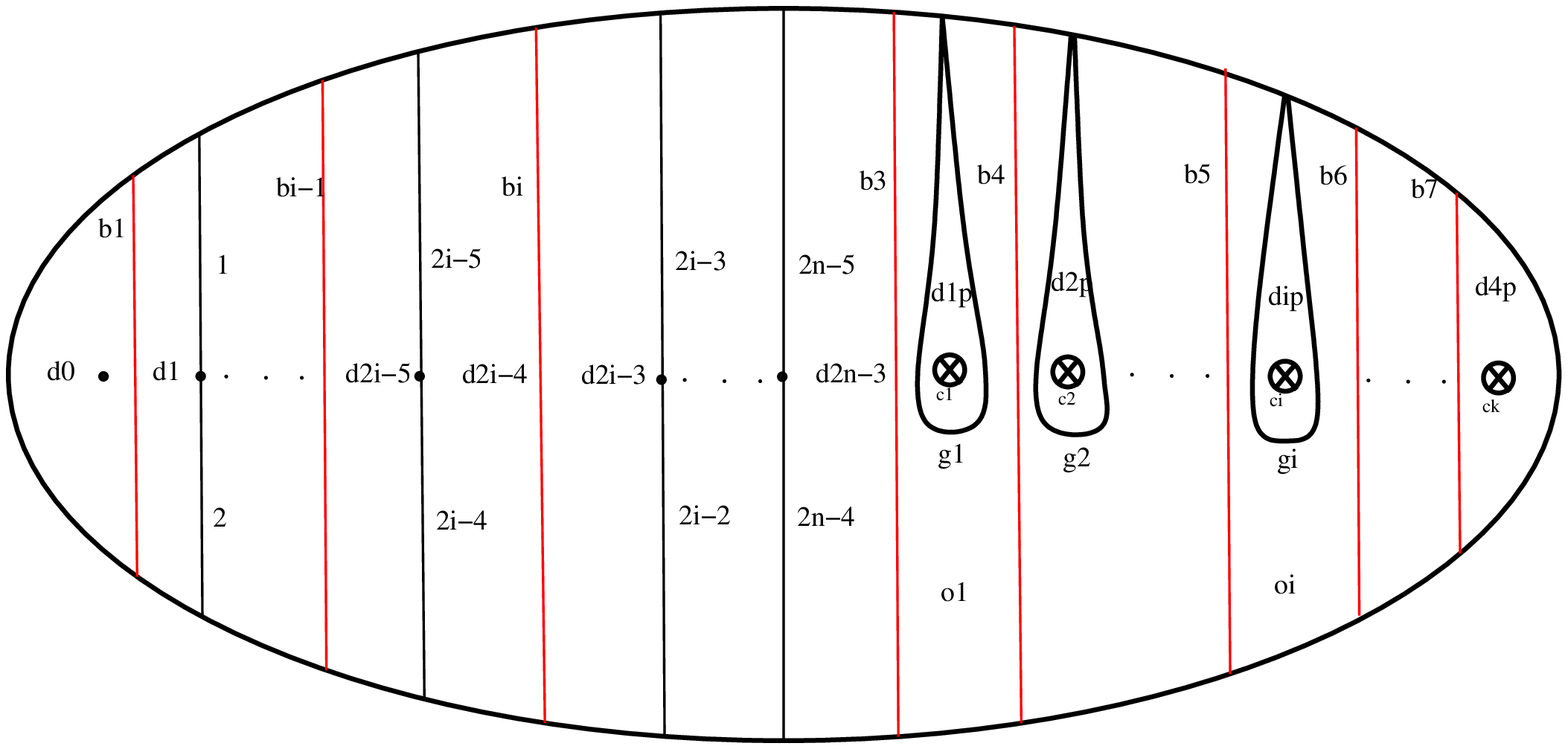}
\caption{The arcs $\alpha_i$, $\beta_i$, $\gamma_i$, the 1-sided curves $c_1,c_2,\dots, c_k$ and the regions $\Delta_i$ and $\Sigma_i$.} 
\label{arcsproof}
\end{center}
\end{figure}

The surface is divided by these arcs into $2n+2k-2$ regions, $2n+k-3$ of these are triangular since each $\Delta_i$ ($1\leq i \leq 2n-2 $) and $\Sigma_i$ ($1\leq i \leq k-1$) is bounded by three arcs when the boundary of the surface is identified to a point.  The two triangles $\Delta_{2i-3}$ and 
$\Delta_{2i-2}$ on the left and right hand side of the $i$-th puncture are defined by the arcs $\alpha_{2i-3}, \alpha_{2i-2}, \beta_{i-1}$ and  $\alpha_{2i-3}, \alpha_{2i-2}, 
\beta_{i}$ respectively. The triangle $\Sigma_{i}$ is 
defined by the arcs $\gamma_{i}, \beta_{n+i-1}, \beta_{n+i}$. Each $\Delta'_i$ ($1\leq i\leq k-1$) is bounded by $\gamma_i$, and the two end regions $\Delta_0$ and $\Delta'_k$ are bounded by $\beta_1$ and $\beta_{n+k-1}$ respectively.  
Given $\mathcal{L}\in \mathcal{L}_{k,n}$, let $L$ be a taut representative of $\mathcal{L}$ with respect to the elements of $\mathcal{A}_{k,n}$. That is, $L$ intersects each of the arcs  and curves in  $\mathcal{A}_{k,n}$ minimally.

\begin{figure}[h!]
\centering
\psfrag{y}[tl]{$\scriptstyle{y}$} 
\psfrag{w}[tl]{$\scriptstyle{w}$} 
\psfrag{3}[tl]{$\scriptstyle{\beta_{n-1}}$} 
\psfrag{d2n-3}[tl]{\textcolor{blue}{$\scriptstyle{\Delta_{2n-3}}$}}
\psfrag{1}[tl]{$\scriptstyle{\alpha_{2n-3}}$}
\psfrag{2}[tl]{$\scriptstyle{\alpha_{2n-2}}$}
\psfrag{2i-1}[tl]{$\scriptstyle{\alpha_{2i-1}}$} 
\psfrag{i+1}[tl]{$\scriptstyle{\beta_{i+1}}$} 
\psfrag{i}[tl]{$\scriptstyle{\beta_{i}}$} 
\psfrag{gi}[tl]{$\scriptstyle{\gamma_{i}}$} 
\psfrag{b1}[tl]{$\scriptstyle{\beta_{n+i-1}}$} 
\psfrag{b2}[tl]{$\scriptstyle{\beta_{n+i}}$} 
\psfrag{2i}[tl]{$\scriptstyle{\alpha_{2i}}$}
\psfrag{d1}[tl]{$\scriptstyle{\Delta_{2i-1}}$} 
\psfrag{d2}[tl]{$\scriptstyle{\Delta_{2i}}$} 
\psfrag{si}[tl]{$\scriptstyle{S_{i}}$} 
\psfrag{oi}[tl]{$\scriptstyle{\Sigma_{i}}$} 
\includegraphics[width=0.5\textwidth]{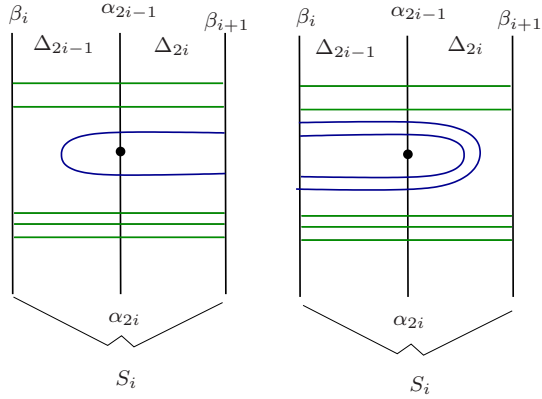}
\caption{There is 1 left loop component in the first case and 2 right loop components in the second case. There are 2 above and 3 below components in each case.} 
\label{pathcomponentsproof1}
\end{figure}

\begin{definition}\label{abovebelowdef1}
Set  $S_i=\Delta_{2i-1}\cup\Delta_{2i}$ for each $i$ with $1\leq i\leq n-1$. A \emph{path component} 
of $L$ in $S_i$ is a component of $L\cap S_i$.  There are four types of path components in $S_i$ as depicted in Figure~\ref{pathcomponentsproof1}:
  
\begin{itemize}
\setlength\itemsep{0.1em}
\item An \emph{above component} has end points on $\beta_i$ and $\beta_{i+1}$, passing across $\alpha_{2i-1}$,
\item A \emph{below component} has end points on $\beta_i$ and $\beta_{i+1}$,  passing across $\alpha_{2i}$,
\item A \emph{left loop component} has both end points on $\beta_{i+1}$,
\item A \emph{right loop component} has both end points on $\beta_{i}$.
\end{itemize}

\end{definition}

 \begin{figure}[ht!]
\centering
\psfrag{y}[tl]{$\scriptstyle{y}$} 
\psfrag{w}[tl]{$\scriptstyle{w}$} 
\psfrag{3}[tl]{$\scriptstyle{\beta_{n-1}}$} 
\psfrag{d2n-3}[tl]{\textcolor{blue}{$\scriptstyle{\Delta_{2n-3}}$}}
\psfrag{1}[tl]{$\scriptstyle{\alpha_{2n-3}}$}
\psfrag{2}[tl]{$\scriptstyle{\alpha_{2n-2}}$}
\psfrag{2i-1}[tl]{$\scriptstyle{\alpha_{2i-1}}$} 
\psfrag{i+1}[tl]{$\scriptstyle{\beta_{i+1}}$} 
\psfrag{i}[tl]{$\scriptstyle{\beta_{i}}$} 
\psfrag{gi}[tl]{$\scriptstyle{\gamma_{i}}$} 
\psfrag{b1}[tl]{$\scriptstyle{\beta_{n+i-1}}$} 
\psfrag{b2}[tl]{$\scriptstyle{\beta_{n+i}}$} 
\psfrag{2i}[tl]{$\scriptstyle{\alpha_{2i}}$}
\psfrag{d1}[tl]{$\scriptstyle{\Delta_{2i-1}}$} 
\psfrag{d2}[tl]{$\scriptstyle{\Delta_{2i}}$} 
\psfrag{si}[tl]{$\scriptstyle{S_{i}}$} 
\psfrag{oi}[tl]{$\scriptstyle{S^{'}_i}$} 
\psfrag{ci}[tl]{$\scriptstyle{c_{i}}$} 
\includegraphics[width=0.85\textwidth]{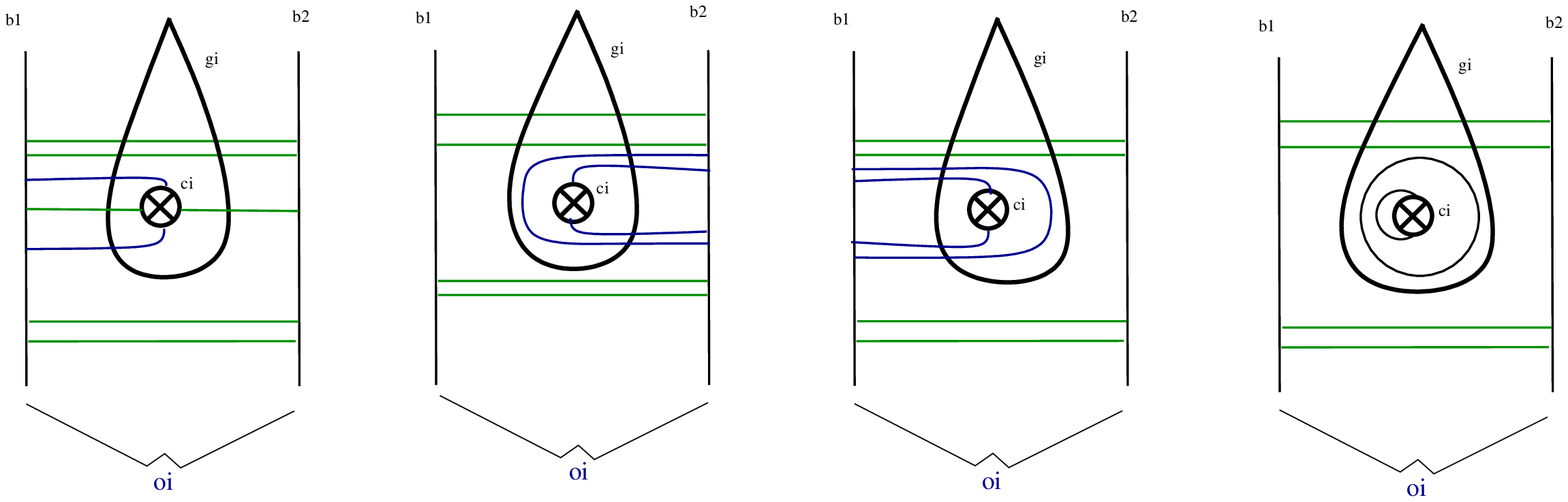}
\vspace{1mm}
\caption{There is $1$ right core loop and $1$ straight core component in the first case;  $1$ left loop and $1$ left core loop component in the second case;  $1$ right non-core loop and $1$ right core loop component in the third case and $1$ $1$-sided and $1$ $2$-sided non-primitive curves in the fourth case. There are 2 above and 2 below components in each case.} 
\label{pathcomponentsproof2} 
\end{figure}

\begin{definition}\label{abovebelowdef2}
Set $S'_i=\Delta'_{i}\cup\Sigma_i$ for each  $1\leq i\leq k-1$. A \emph{path component} 
of $L$ in $S'_i$ is a component of $L\cap S'_i$.  There are $7$ types of path components in $S^{'}_i$ as depicted in Figure  \ref{pathcomponentsproof2}.
\begin{itemize}
\setlength\itemsep{0.1em}
\item An \emph{above component} has end points on $\beta_{n+i-1}$ and $\beta_{n+i}$, and passes across $\gamma_{i}$ without intersecting $c_i$,
\item A \emph{below component} has end points on $\beta_{n+i-1}$ and $\beta_{n+i}$, and doesn't pass across $\gamma_{i}$,
\item A \emph{left loop component} has both end points on $\beta_{n+i}$,
\item A \emph{right loop component} has both end points on $\beta_{n+i-1}$,

If a loop component intersects $c_i$, it is called \emph{core loop component} otherwise it is called non-core loop component.
\item A \emph{straight core component} has end points on $\beta_{n+i-1}$ and $\beta_{n+i}$, and intersects $c_i$,

\item A non-primitive $1$-sided curve,

	If $L$ contains a non-primitive $1$-sided curve $c_i$ we depict it with a ring with end points on the $i$-th crosscap as shown in the fourth case in Figure \ref{pathcomponentsproof2}.

\item A non-primitive $2$-sided curve.
\end{itemize}

\end{definition}


\section{Triangle coordinates}
Let $L$ be a taut representative of $\mathcal{L}$. Write $\alpha_i, \beta_i, \gamma_i$ and $c_i$  for the geometric intersection number of $L$ with the arc  $\alpha_i, \beta_i, \gamma_i$ and the core curve $c_i$ respectively.  It will always be clear from the context whether we mean the arc or the geometric intersection number assigned on the arc.

\begin{definition}
The triangle coordinate function $\tau \colon \mathcal{L}_{k,n}\to (\mathbb{Z}_{\geq 0}^{3n+2k-4}\times \mathbb{Z}^k)\setminus\left\{0\right\}$  
is defined by  
\begin{align*}
\tau(\mathcal{L})&=(\alpha_1, \ldots, \alpha_{2n-2}; \beta_1, \ldots, \beta_{n+k-1}; \gamma_1, \ldots, \gamma_{k-1}; c_1, \ldots, c_k).
\end{align*}
\noindent where  $c_i=-1$ if $L$ contains the $i$-th core curve;  $c_i=-2m$ if it contains $m\in \mathbb{Z}^+$ disjoint copies  2-sided non-primitive curves around the $i$-th crosscap, and  $c_i=-2m-1$ if it contains $m$ disjoint copies of 2-sided non-primitive curves around the $i$-th crosscap plus the $i$-th core curve. 

\end{definition}

\begin{remark}\label{sumofloop}
Let $b_i=\frac{\beta_{i}-\beta_{i+1}}{2}$ for $1\leq i \leq n+k-2$. Then in each $S_i$ ($1\leq i \leq n-1$) and $S'_i$ ($n\leq i \leq n+k-2$) there are $|b_i|$ loop components. Furthermore, if $b_i<0$ these loop components are left, and if $b_i>0$ they are right.  \end{remark}

The proof of the next lemma is obvious from Figure \ref{pathcomponentsproof1}.

\begin{lemma}\label{abovebelow}
Let $1\leq i \leq n-1$. The number of above and below components in $S_i$ are given by $a_{S_i}=\alpha_{2i-1}-|b_i|$ and $b_{S_i}=\alpha_{2i}-|b_i|$ respectively. 
\end{lemma}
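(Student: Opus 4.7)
The plan is a direct counting argument. I would inspect Figure \ref{pathcomponentsproof1} to tabulate how each of the four types of path component in $S_i$ meets the arcs $\alpha_{2i-1}$ and $\alpha_{2i}$: an above component crosses $\alpha_{2i-1}$ exactly once and is disjoint from $\alpha_{2i}$; a below component does the opposite; and each loop component in $S_i$, being a non-trivial simple arc in the once-punctured region $S_i$ that separates the $(i+1)$-th puncture from one of the boundary arcs $\beta_i,\beta_{i+1}$, meets each of $\alpha_{2i-1}$ and $\alpha_{2i}$ exactly once. The minimality of these crossings is forced by the tautness of $L$: any excess crossing with a single $\alpha$-arc would produce an innermost bigon that could be eliminated by an isotopy supported in $S_i$, contradicting minimality of $L\cap \mathcal{A}_{k,n}$.

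Writing $\ell$ and $r$ for the numbers of left and right loop components in $S_i$, summing the contributions yields
\begin{align*}
\alpha_{2i-1} &= a_{S_i} + \ell + r,\\
\alpha_{2i} &= b_{S_i} + \ell + r.
\end{align*}
By Remark \ref{sumofloop}, the total number of loop components in $S_i$ equals $|b_i|$, since all loops lie on the same side of $S_i$ (left if $b_i<0$ and right if $b_i>0$), so $\ell + r = |b_i|$. Substituting into the two equations gives the asserted formulas $a_{S_i} = \alpha_{2i-1} - |b_i|$ and $b_{S_i} = \alpha_{2i} - |b_i|$.

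The argument is essentially a bookkeeping identity once the crossing tabulation has been read off the figure; the only step requiring a short justification beyond that is ruling out multiple crossings between a single loop component and one $\alpha$-arc, which is the standard innermost-bigon reduction. I therefore do not expect any substantive obstacle, which is consistent with the paper's remark that the statement is obvious from Figure \ref{pathcomponentsproof1}.
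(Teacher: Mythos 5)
Your proposal is correct and matches the paper's intent: the paper simply declares the lemma obvious from Figure \ref{pathcomponentsproof1}, and your argument is precisely the counting that the figure encodes — each loop component crosses both $\alpha_{2i-1}$ and $\alpha_{2i}$ once, each above (resp.\ below) component crosses only $\alpha_{2i-1}$ (resp.\ $\alpha_{2i}$), and Remark \ref{sumofloop} supplies the count $|b_i|$ of loops. The tautness/bigon justification for minimal crossings is a reasonable addition but does not change the route.
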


Let $\lambda_{i}$ and $\lambda_{c_i}$ denote the number of non-core and core loop components, $\psi_{i}$ the number of straight core components, and $a_{S'_i}$ and $b_{S'_i}$ the number of above and below components in $S'_i$.

\begin{lemma}\label{abovebelow2}
Let $L$ be a taut representative of $\mathcal{L}\in \mathcal{L}_{k,n}$, and set $c^+_i=\max(c_i,0)$. Then for each $1\leq i \leq k-1$ we have

\begin{align*}
\lambda_{i}&=\max(|b_{n+i-1}|-c^+_i,0),                        &&\lambda_{c_i}= \min(|b_{n+i-1}|,c^+_i),\\
\psi_{i}&=\max(c^+_i-|b_{n+i-1}|,0).
\end{align*}
\end{lemma}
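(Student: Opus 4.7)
The plan is to derive the three formulas from two linear relations among $\lambda_i$, $\lambda_{c_i}$, $\psi_i$, together with the tautness constraint $\min(\lambda_i,\psi_i)=0$.

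First, by Remark~\ref{sumofloop} there are exactly $|b_{n+i-1}|$ loop components in $S'_i$ (all of one handedness according to the sign of $b_{n+i-1}$), and by Definition~\ref{abovebelowdef2} each is either core or non-core, giving
\[
\lambda_i + \lambda_{c_i} = |b_{n+i-1}|.
\]
Second, when $c_i>0$ the lamination contains no non-primitive curves around the $i$-th crosscap, so $c_i$ equals $|L\cap c_i|$; inspecting the seven path-component types shows that only core loops and straight cores meet $c_i$, each in exactly one point for a taut representative (an essential arc in the M\"obius band $\Delta'_i$ with endpoints on $\gamma_i$ crosses $c_i$ minimally once). When $c_i\le 0$, $c_i^+=0$ and neither core loops nor straight cores occur. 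In either case
\[
\lambda_{c_i} + \psi_i = c_i^+.
\]

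The main step is to prove $\min(\lambda_i,\psi_i)=0$ by an innermost-disc argument. Suppose $\lambda_i>0$ and $\psi_i>0$ and pick, say, a non-core right loop $\mu$ and a straight core $\sigma$ in $S'_i$. The $\Delta'_i$-part of $\mu$ is a trivial arc in the M\"obius band $\Delta'_i$, so it bounds together with a subarc $\delta\subset\gamma_i$ a disc $D\subset\Delta'_i$ disjoint from $c_i$. Choose $\mu$ so that $D$ is innermost. Then $D$ contains no other $\Delta'_i$-arc of $L$: core-loop and straight-core arcs cross $c_i$ and so cannot lie in the $c_i$-free disc $D$; above-component arcs have their two $\gamma_i$-endpoints on opposite sides of the vertex of $\Sigma_i$, which $\delta$ does not separate; and no other non-core loop arc is inside by the innermost choice. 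In particular the $\Delta'_i$-part of $\sigma$ is disjoint from $D$, so pushing $\mu$ across $D$ is an ambient isotopy that strictly decreases $|L\cap\gamma_i|$ without changing any other intersection number in $\mathcal{A}_{k,n}$, contradicting the tautness of $L$.

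Combining $\min(\lambda_i,\psi_i)=0$ with the two linear relations and nonnegativity of each variable finishes the proof. If $|b_{n+i-1}|\ge c_i^+$, then $\psi_i=0$, yielding $\lambda_{c_i}=c_i^+$ and $\lambda_i=|b_{n+i-1}|-c_i^+$; if $|b_{n+i-1}|< c_i^+$, then $\lambda_i=0$, yielding $\lambda_{c_i}=|b_{n+i-1}|$ and $\psi_i=c_i^+-|b_{n+i-1}|$. Both cases coincide with the $\min/\max$ expressions in the statement. The main obstacle is the innermost-disc step: one must verify case-by-case that no other $\Delta'_i$-arc of $L$ can sit inside $D$, which rests on the position of the four arc types relative to $\gamma_i^\pm$ and $c_i$, so that the push really is a valid ambient isotopy.
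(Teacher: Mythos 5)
Your overall skeleton is the same as the paper's: the two counting identities $\lambda_i+\lambda_{c_i}=|b_{n+i-1}|$ and $\lambda_{c_i}+\psi_i=c_i^+$, the exclusivity $\min(\lambda_i,\psi_i)=0$, and the case analysis. The paper simply reads the exclusivity off Figure~\ref{pathcomponentsproof2}; you try to prove it, and that is where the gap lies. Your innermost-disc argument never actually uses the straight core $\sigma$: the push you describe would remove an (innermost) non-core loop from $\gamma_i$ whether or not $\psi_i>0$, so if the argument were valid it would show $\lambda_i=0$ for \emph{every} taut representative. That is false --- Example~\ref{firstexample} has $\lambda_1=1$. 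The specific incorrect step is the claim that the $\Delta'_i$-arc $\mu'$ of a non-core loop cuts off a disc $D$ bounded by $\mu'$ and a subarc $\delta\subset\gamma_i$. Since $\gamma_i$ has both endpoints on $\partial N_{k,n}$, the boundary of the M\"obius band $\Delta'_i$ is $\gamma_i\cup w$ with $w\subset\partial N_{k,n}$; for a genuine non-core loop, which encircles the crosscap, the complementary component of $\mu'$ that misses $c_i$ has $w$ in its frontier, so the ``push across $D$'' would have to cross the surface boundary and is not an isotopy. (Indeed, tautness is exactly the statement that no arc of $L$ in $\Delta'_i$ cuts off a disc meeting neither $c_i$ nor $w$, so the configuration you posit is already vacuous, independently of $\sigma$.)

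The exclusivity should instead come from embeddedness, not tautness. If $\mu$ is, say, a right non-core loop, cap it with the subarc $\tau$ of $\beta_{n+i-1}$ joining its endpoints: $\mu\cup\tau$ is a simple closed curve encircling the $i$-th crosscap, hence separating $c_i$ from $\beta_{n+i}$ inside $S'_i$. A straight core component $\sigma$ meets $c_i$ and has an endpoint on $\beta_{n+i}$, so it must cross $\mu\cup\tau$; it cannot cross $\mu$ (distinct components of $L\cap S'_i$ are disjoint) and it meets $\beta_{n+i-1}$ only at its single endpoint, which lies on one fixed side of $\mu\cup\tau$ --- either way $\sigma$ cannot both reach $c_i$ and escape to $\beta_{n+i}$, a contradiction. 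With this replacement (and the symmetric argument for left loops), the rest of your proof --- the two linear relations, nonnegativity, and the two cases $|b_{n+i-1}|\geq c_i^+$ and $|b_{n+i-1}|<c_i^+$ --- goes through and recovers the stated $\min/\max$ formulas, including the non-primitive case via $c_i^+=0$.
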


\begin{proof}
Assume that $L$ doesn't contain any non-primitive curve in $S'_i$. Since $c_i$ gives the sum of straight core and core loop components and $|b_{n+i-1}|$ gives the sum of non-core loop and core loop components in $S'_i$ 
(see Figure \ref{pathcomponentsproof2}) we have 
\begin{equation} \label{eq:coreloop}
c_i=\psi_{i}+\lambda_{c_i}  \hspace{0.5cm} \text{and} \hspace{0.5cm}  |b_{n+i-1}|=\lambda_{i}+\lambda_{c_i}.
\end{equation}

	If $c_i>|b_{n+i-1}|$, then clearly there exists a straight core component in $S'_i$, and hence no non-core loop component in $S'_i$ that is $\lambda_{i}=0$. Therefore in this case, $\lambda_{c_i}=|b_{n+i-1}|$ and hence $\psi_{i}=c_i-|b_{n+i-1}|$ by Equation \ref{eq:coreloop}. 
	
		If $c_i<|b_{n+i-1}|$, there exists a non-core loop component in $S'_i$, and hence no straight core components in $S'_i$ that is $\psi_{i}=0$.  Therefore $c_i=\lambda_{c_i}$ and hence $\lambda_i=|b_{n+i-1}|-c_i$ by Equation \ref{eq:coreloop}. We get
		
\begin{align*}
\lambda_{i}&=\max(|b_{n+i-1}|-c_i,0)\\                        
\psi_{i}&=\max(c_i-|b_{n+i-1}|,0).
\end{align*}

Also if $|b_{n+i-1}|<c_i, \lambda_i=0$ and hence $\lambda_{c_i}= |b_{n+i-1}|$, if $|b_{n+i-1}|>c_i, \psi_{i}=0$ and hence $\lambda_{c_i}=c_i$ by Equation \ref{eq:coreloop}. Therefore we get, $\lambda_{c_i}= \min(|b_{n+i-1}|,c_i)$.

		Finally, if $L$  contains a non-primitive curve in $S'_i$, there can be no straight core  and core loop component in $S'_i$ that is $\psi_i=\lambda_{c_i}=0$, hence $\lambda_i=|b_{n+i-1}|$. Since $c_i<0$ by definition, setting $c^+_i=\max(c_i,0)$ we can write  

\begin{align*}
\lambda_{i}&=\max(|b_{n+i-1}|-c^+_i,0),                        &&\lambda_{c_i}= \min(|b_{n+i-1}|,c^+_i),\\
\psi_{i}&=\max(c^+_i-|b_{n+i-1}|,0).
\end{align*}

\end{proof}

\begin{lemma}\label{abovebelow3}
Let $L$ be a taut representative of $\mathcal{L}\in \mathcal{L}_{k,n}$. For each $1\leq i \leq k-1$ we have
\begin{align*}
a_{S'_i}&=\frac{\gamma_{i}}{2}-|b_{n+i-1}|-\psi_i\\
b_{S'_i}&=\max(\beta_{n+i-1}, \beta_{n+i})-|b_{n+i-1}|-\frac{\gamma_i}{2}.
\end{align*}

\end{lemma}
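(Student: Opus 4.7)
The plan is to derive both identities by enumerating the path components of $L\cap S'_i$ from Definition \ref{abovebelowdef2} and summing their contributions to the intersection numbers $\gamma_i$, $\beta_{n+i-1}$, and $\beta_{n+i}$, then solving the resulting linear system for $a_{S'_i}$ and $b_{S'_i}$.

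First I would compute $\gamma_i$. Reading off the local pictures in Figure \ref{pathcomponentsproof2}, every above, straight core, and loop component (core and non-core alike) enters and exits $\Delta'_i$ through $\gamma_i$ and thus contributes $2$, while below components and the non-primitive curves do not meet $\gamma_i$ at all. This yields
\[
\gamma_i \;=\; 2 a_{S'_i} + 2(\lambda_i + \lambda_{c_i}) + 2\psi_i.
\]
By Remark \ref{sumofloop} the total number of loop components $\lambda_i + \lambda_{c_i}$ equals $|b_{n+i-1}|$, so solving for $a_{S'_i}$ delivers the first formula.

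Next I would count endpoints on $\beta_{n+i-1}$ and $\beta_{n+i}$. Above, below, and straight core components each contribute one endpoint to each of the two $\beta$-arcs; right loops contribute two endpoints to $\beta_{n+i-1}$ and none to $\beta_{n+i}$, while left loops do the symmetric thing; non-primitive curves contribute none. By tautness all $|b_{n+i-1}|$ loops are of a single type and sit on the $\beta$-arc with the larger intersection number, so in either case
\[
\max(\beta_{n+i-1},\beta_{n+i}) \;=\; a_{S'_i} + b_{S'_i} + 2|b_{n+i-1}| + \psi_i.
\]
Substituting the expression for $a_{S'_i}$ from the first step and isolating $b_{S'_i}$ produces the second formula.

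The delicate point is the input to the $\gamma_i$-count: one must verify that every non-core loop component, not only every core loop, crosses $\gamma_i$ twice in the taut representative. This needs a careful reading of Figure \ref{pathcomponentsproof2} for the position of such loops inside $\Delta'_i$ when $|b_{n+i-1}|>c_i^+$: those surplus loops wrap around a 2-sided non-primitive curve or around the crosscap region without meeting $c_i$, and cannot be pushed out of $\Delta'_i$ without increasing some other intersection in $\mathcal{A}_{k,n}$. Once this geometric observation is recorded, the rest is routine arithmetic.
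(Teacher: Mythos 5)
Your proposal is correct and follows essentially the same route as the paper: the first identity comes from counting that every above, loop, and straight core component crosses $\gamma_i$ twice while below components (and non-primitive curves) miss it, and the second from counting endpoints on $\max(\beta_{n+i-1},\beta_{n+i})$ with loops counted twice, then substituting the first formula. Your extra remark justifying that non-core loops must cross $\gamma_i$ twice is a point the paper simply reads off Figure \ref{pathcomponentsproof2}, but it does not change the argument.
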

\begin{proof}

To compute the number of above and below components in $S'_i$ we observe that each path component other than a below component in $S'_i$ intersects $\gamma_i$ twice, that is $ \gamma_i=2(a_{S'_i}+|b_{n+i-1}|+\psi_i)$. Therefore we get,
$$ a_{S'_i}=\frac{\gamma_{i}}{2}-|b_{n+i-1}|-\psi_i.$$
\noindent To compute the number of below components, we note that the sum of all path components in $S'_i$ is given by $\beta=\max(\beta_{n+i-1}, \beta_{n+i})$. Then $b_{S'_i}$ is $\beta$ minus  the  number of above, straight core components and twice the number loop components in ${S'_i}$ (each loop component intersects $\beta$ twice). We get
\begin{align*}b_{S'_i}&=\max(\beta_{n+i-1}, \beta_{n+i})-a_{S'_i}-2|b_{n+i-1}|-\psi_i\\
&=\max(\beta_{n+i-1}, \beta_{n+i})-|b_{n+i-1}|-\frac{\gamma_i}{2}\end{align*}\end{proof}

Another way of expressing $ a_{S'_i}$ and $b_{S'_i}$ is given in item P$4.$ in Properties \ref{properties}.

\begin{remark}\label{noncore}
Observe that the loop components in $\Delta_{0}$ are always left and the number of 
them is given by $\frac{\beta_{1}}{2}$. Similarly, the loop components in $\Delta'_{k}$ are always right and the number of core and non-core loop components in $\Delta'_{k}$  are given by $c_k$ and 
$\lambda_k=\frac{\beta_{n+k-1}}{2}-c_k$.
\end{remark}
Lemma \ref{equalities} and Lemma \ref{equalities2} are obvious from Figure \ref{pathcomponentsproof1} and Figure \ref{pathcomponentsproof2}.

\begin{lemma}\label{equalities}
There are equalities for each $S_i$: 
\begin{itemize}
\item When there are left loop components ($b_i<0$), 
\begin{align*}
\alpha_{2i}+\alpha_{2i-1}                    &=\beta_{i+1}\\
\alpha_{2i}+\alpha_{2i-1}-\beta_i        &=2|b_i|,
\end{align*}

\item When there are right loop components ($b_i>0$),
\begin{align*}
\alpha_{2i}+\alpha_{2i-1}                          &=\beta_{i}\\
\alpha_{2i}+\alpha_{2i-1}-\beta_{i+1}       &=2|b_i|,
\end{align*}

\item When there are no loop components ($b_i=0$),
\begin{align*}
\alpha_{2i}+\alpha_{2i-1}=\beta_i=\beta_{i+1}.
\end{align*}
\end{itemize}
\end{lemma}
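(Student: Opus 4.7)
The plan is to prove all three cases simultaneously by counting the contributions of each type of path component in $S_i$ to the four intersection numbers $\alpha_{2i-1}, \alpha_{2i}, \beta_i, \beta_{i+1}$. Let $a, b, \ell, r$ denote, respectively, the number of above, below, left loop and right loop components of $L$ in $S_i$. Inspecting the four pictures in Definition~\ref{abovebelowdef1} (see Figure~\ref{pathcomponentsproof1}), I would record that an above component contributes $(1,0,1,1)$ to $(\alpha_{2i-1},\alpha_{2i},\beta_i,\beta_{i+1})$, a below component contributes $(0,1,1,1)$, a left loop contributes $(1,1,0,2)$, and a right loop contributes $(1,1,2,0)$. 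Summing gives the four linear relations
\begin{align*}
\alpha_{2i-1} &= a + \ell + r, & \alpha_{2i} &= b + \ell + r,\\
\beta_i &= a + b + 2r, & \beta_{i+1} &= a + b + 2\ell.
\end{align*}

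Next I would use tautness of $L$ to show $\ell \cdot r = 0$: a left loop and a right loop in $S_i$ would bound a bigon (with one arc from each loop) in $S_i$ together with a sub-arc of either $\alpha_{2i-1}$ or $\alpha_{2i}$, contradicting minimality of intersections with the arcs in $\mathcal{A}_{k,n}$. Hence at most one of $\ell, r$ is nonzero, so $\beta_{i+1}-\beta_i = 2(\ell - r)$ coincides with $2b_i$ in absolute value and in sign as recorded in Remark~\ref{sumofloop}; in particular $\ell = |b_i|$ in Case~1, $r = |b_i|$ in Case~2, and $\ell = r = 0$ in Case~3.

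Finally the three cases follow by substitution into the displayed relations. When $b_i < 0$ (so $r = 0$ and $\ell = |b_i|$), adding the two $\alpha$-equations yields $\alpha_{2i-1} + \alpha_{2i} = a + b + 2\ell = \beta_{i+1}$, and subtracting $\beta_i = a+b$ gives $\alpha_{2i-1} + \alpha_{2i} - \beta_i = 2\ell = 2|b_i|$. When $b_i > 0$ the symmetric computation with $\ell = 0$ and $r = |b_i|$ yields the right-loop identities, and when $b_i = 0$ both $\ell$ and $r$ vanish and $\alpha_{2i-1} + \alpha_{2i} = a+b = \beta_i = \beta_{i+1}$.

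The main obstacle, such as it is, is simply the tautness observation that left and right loops cannot coexist in $S_i$; once that bigon argument is made, every other step is routine bookkeeping on the four path-component types, which is why the author declares the statement ``obvious from Figure~\ref{pathcomponentsproof1}''.
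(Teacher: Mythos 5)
Your proof is correct and is precisely the component-count that the paper leaves implicit when it declares the lemma ``obvious from Figure~\ref{pathcomponentsproof1}'': the contribution table for the four path-component types and the case-by-case substitution are the intended argument. The only quibble is that $\ell\cdot r=0$ follows more directly from the fact that $L$ is a disjoint union of simple closed curves (a left loop and a right loop each encircle the puncture from opposite walls of $S_i$, so they would have to cross) --- and is in any case already asserted in Remark~\ref{sumofloop} --- rather than from a bigon/tautness argument, but this does not affect correctness.
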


\begin{lemma}\label{equalities2}
There are equalities for each $S'_i$: 
\begin{itemize}
\item When there are left loop components ($b_{n+i-1}<0$), 
\begin{align*} 
a_{S'_i}+ b_{S'_i}+\psi_{i}+2|b_{n+i-1}|&=\beta_{n+i}\\
a_{S'_i}+ b_{S'_i}+\psi_{i}&=\beta_{n+i-1}
\end{align*}
\item When there are right loop components ($b_{n+i-1}> 0$) 
\begin{align*}
a_{S'_i}+ b_{S'_i}+\psi_{i}+2|b_{n+i-1}|&=\beta_{n+i-1}.\\
a_{S'_i}+ b_{S'_i}+\psi_{i}&=\beta_{n+i}
\end{align*}
\item When there are no loop components $b_{n+i-1}=0$ 
\begin{align*}
a_{S'_i}+ b_{S'_i}+\psi_{i}=\beta_{n+i}=\beta_{n+i-1}.
\end{align*}
\end{itemize}
\end{lemma}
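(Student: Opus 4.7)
The plan is to verify the three cases by endpoint-counting on the two boundary arcs $\beta_{n+i-1}$ and $\beta_{n+i}$ of the region $S'_i = \Delta'_i \cup \Sigma_i$. Since $L$ is taut, its geometric intersection number with $\beta_{n+i-1}$ (resp.\ $\beta_{n+i}$) equals the total number of endpoints of path components of $L \cap S'_i$ that lie on $\beta_{n+i-1}$ (resp.\ $\beta_{n+i}$). So the strategy is: classify every path component listed in Definition~\ref{abovebelowdef2}, record how many endpoints it contributes to each of the two arcs, and sum.

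First, I would tabulate the contributions, reading them off Figure~\ref{pathcomponentsproof2}. An above, below, or straight core component has exactly one endpoint on $\beta_{n+i-1}$ and one on $\beta_{n+i}$, so each contributes $1$ to each count. A left loop component has both endpoints on $\beta_{n+i}$ and contributes $0$ to $\beta_{n+i-1}$ and $2$ to $\beta_{n+i}$; symmetrically, a right loop contributes $2$ to $\beta_{n+i-1}$ and $0$ to $\beta_{n+i}$. The non-primitive $1$-sided and $2$-sided curves are closed, so they contribute nothing to either count (and also nothing to $a_{S'_i}$, $b_{S'_i}$, or $\psi_i$). Writing $\ell_i$ and $r_i$ for the number of left and right loop components in $S'_i$, the totals are
\begin{align*}
\beta_{n+i-1} &= a_{S'_i} + b_{S'_i} + \psi_i + 2 r_i,\\
\beta_{n+i}   &= a_{S'_i} + b_{S'_i} + \psi_i + 2 \ell_i.
\end{align*}

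Next I would invoke Remark~\ref{sumofloop}: precisely one of $\ell_i, r_i$ is nonzero, and it equals $|b_{n+i-1}|$, with the sign of $b_{n+i-1}$ determining which one. Substituting each of the three sign regimes into the two displayed identities above yields the three claimed cases directly: if $b_{n+i-1} < 0$ then $\ell_i = |b_{n+i-1}|$ and $r_i = 0$, giving the first pair of equalities; if $b_{n+i-1} > 0$ then $r_i = |b_{n+i-1}|$ and $\ell_i = 0$, giving the second; and if $b_{n+i-1} = 0$ both loop counts vanish and the two equations collapse to the stated single identity.

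There is no real obstacle here: the lemma is a pure bookkeeping statement, and the only subtle point is to remember that non-primitive curves, though genuine components of $L \cap S'_i$, are closed and so drop out of every endpoint count, which is why they do not appear in the equalities. Consequently the argument is uniform across all four rows of Figure~\ref{pathcomponentsproof2} and no further case analysis within the non-primitive sub-cases is needed.
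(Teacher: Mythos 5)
Your proof is correct and is essentially the paper's own argument made explicit: the paper simply declares Lemma~\ref{equalities2} ``obvious from Figure~\ref{pathcomponentsproof2}'', and your endpoint count on $\beta_{n+i-1}$ and $\beta_{n+i}$ (with Remark~\ref{sumofloop} deciding which loop type occurs) is exactly the bookkeeping that reading the figure amounts to. The observation that closed non-primitive components drop out of every endpoint count is a worthwhile clarification, but it does not change the route.
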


\begin{example}\label{firstexample}

Let $\tau(\cL)=(4, 2, 2, 6; 2, 6, 8, 4; 8; 1, 1)$ be the triangle coordinates of an integral lamination 
$\cL\in \mathcal{L}_{2,3}$. We shall show how we draw $\cL$ from 
its given triangle coordinates. First, we compute the loop components in the two end regions $\Delta_{0}$ and $\Delta'_{2}$ using Remark \ref{noncore}. Since $\beta_1=2$ there is one loop component in $\Delta_{0}$. Similarly, since $\beta_4=4$ and $c_2=1$, we get $\lambda_2=\frac{\beta_{4}}{2}-c_2=1$. 

	Next, we compute loop components in $S_1$, $S_2$ and $S'_1$. Since  $b_i=\frac{\beta_i-\beta_{i+1}}{2}$ for each $1\leq i\leq 3$ we have $b_1=-2, b_2=-1$. Hence there are two left loop components in $S_1$, and one left component in $S_2$. Similarly since $b_3=2$ there are $2$ right loop components in $S'_1$, and by Lemma \ref{abovebelow2},  $\lambda_{1}=\max(|b_{3}|-c_1,0)=1$ (hence $\psi_1=0$) and $\lambda_{c_1}= \min(|b_{3}|,c_1)=1$. Using Lemma~\ref{abovebelow} and Lemma \ref{abovebelow3} we compute the number of above and below  components. We get $a_{S_1}=\alpha_1-|b_1|=2$, $b_{S_1}=\alpha_2-|b_1|=0$, $a_{S_2}=\alpha_3-|b_2|=1$, $b_{S_2}=\alpha_4-|b_2|=5$, and 
\begin{align*}
a_{S'_1}&=\frac{\gamma_{1}}{2}-|b_{3}|-\psi_1=2\\
b_{S'_1}&=\max(\beta_{3}, \beta_{4})-|b_{3}|-\frac{\gamma_1}{2}=2.
\end{align*}

Connecting the path components in each $\Delta_{0}$, $\Delta'_{2}$, $S_1$, $S_2$ and $S'_1$ we draw the integral lamination as shown in Figure \ref{munu2proof}.
\end{example}

\begin{figure}[h!]
\begin{center}
\psfrag{0}[tl]{$\scriptstyle{0}$} 
\psfrag{b1}[tl]{$\scriptstyle{2}$} 
\psfrag{b2}[tl]{$\scriptstyle{6}$} 
\psfrag{b3}[tl]{$\scriptstyle{8}$}
\psfrag{b4}[tl]{$\scriptstyle{4}$} 
\psfrag{2i}[tl]{$\scriptstyle{5}$}
\psfrag{2i-5}[tl]{$\scriptstyle{4}$}
\psfrag{2i-2}[tl]{$\scriptstyle{6}$}
\psfrag{2i-3}[tl]{$\scriptstyle{2}$}
\psfrag{2i-1}[tl]{$\scriptstyle{1}$}
\psfrag{2i-4}[tl]{$\scriptstyle{2}$}
\psfrag{g1}[tl]{$\scriptstyle{8}$} 
\includegraphics[width=0.65\textwidth]{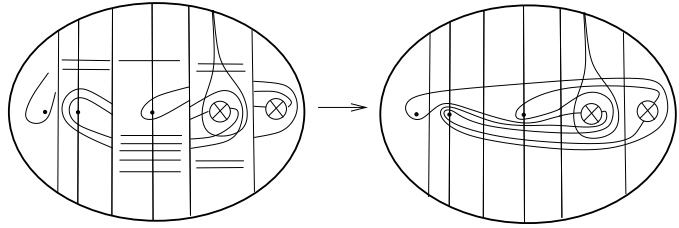}
\caption{$\tau(L)=(4, 2, 2, 6; 2, 6, 8, 4; 8; 1, 1)$}\label{munu2proof}
\end{center}
\end{figure}
\begin{lemma}\label{tauinject}
The triangle coordinate function  $\tau \colon \mathcal{L}_{k,n}\to (\mathbb{Z}_{\geq 0}^{3n+2k-4}\times \mathbb{Z}^k)\setminus\left\{0\right\}$ is injective.
\end{lemma}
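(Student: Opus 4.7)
The plan is to show that from any tuple $\tau(\cL)\in(\mathbb{Z}_{\geq 0}^{3n+2k-4}\times\mathbb{Z}^k)\setminus\{0\}$ in the image of $\tau$, one can reconstruct a taut representative of $\cL$ uniquely up to isotopy. Example~\ref{firstexample} already demonstrates the algorithm; the task is to verify that each step is forced. I would first use Remark~\ref{sumofloop} together with Lemmas~\ref{abovebelow}, \ref{abovebelow2}, \ref{abovebelow3}, and Remark~\ref{noncore} to determine, purely from the coordinates, the exact number of path components of each type in every region: the loop components in the end regions $\Delta_0$ and $\Delta'_k$, the above/below/loop counts in each $S_i$, and the above/below/straight core/(core and non-core) loop counts in each $S'_i$. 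The sign and parity of $c_i$ additionally determines the presence or multiplicity of non-primitive $1$-sided and $2$-sided curves around the $i$-th crosscap.

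Next I would argue that, within each region, the configuration of path components is determined up to isotopy. Each region is either a triangle (after collapsing $\partial N_{k,n}$ to a point) or the union of a triangle with the collar $\Delta'_i$ of $\gamma_i$; two essential arcs of the same type in such a region, having fixed endpoint arcs and fixed topological type, are ambient isotopic rel endpoints, and pairwise disjoint components of each type can be put in a unique standard parallel family. In the regions $S'_i$, the four types that interact with the crosscap (core loop on either side, straight core, and non-primitive curves) are distinguished by their endpoint arc and by intersection with $c_i$, so once the counts are fixed there is a unique parallel arrangement. This reduces the reconstruction to specifying how the endpoints of path components on each arc $\beta_i$ are paired across $\beta_i$.

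Then I would show that this pairing across each $\beta_i$ is forced by tautness. The endpoints on $\beta_i$ inherit a linear order from $\beta_i$; a taut (bigon-free) representative must pair these endpoints by the unique non-crossing matching that is compatible with the types of components on either side (for example, above components on the $S_{i-1}$ side join to above components on the $S_i$ side in order, loop components nested on one side pair with whatever sits ``outside'' them, etc.). Any other pairing either creates a bigon between the lamination and some arc of $\mathcal{A}_{k,n}$, contradicting minimality, or produces self-intersection. Concatenating these forced pairings across all $\beta_i$ produces a lamination $L$ whose isotopy class depends only on the coordinates; if $\cL_1,\cL_2$ have $\tau(\cL_1)=\tau(\cL_2)$ then their taut representatives match region-by-region and pair-up-by-pair-up, so $\cL_1=\cL_2$.

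The main obstacle I anticipate is the careful justification that the pairing across each $\beta_i$ is unique. One has to rule out alternative matchings that a priori use the same number of components of each type but produce an avoidable bigon with some $\alpha_j$, $\beta_j$, or $\gamma_j$; the presence of crosscaps in the $S'_i$ regions makes this bookkeeping heavier than in the orientable disk case treated in \cite{D02}. The cleanest way to handle this is to partition the endpoints on $\beta_i$ into intervals according to the type of component they belong to on each side (using the counts from Lemmas~\ref{abovebelow}--\ref{abovebelow3} and the equalities of Lemmas~\ref{equalities} and~\ref{equalities2} to check that the interval sizes on the two sides agree), and then observe that within each matching interval the only bigon-free completion is the parallel one.
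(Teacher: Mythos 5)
Your proposal follows essentially the same route as the paper: determine the number of path components of each type in every region from the coordinates via Remark~\ref{sumofloop} and Lemmas~\ref{abovebelow}--\ref{abovebelow3}, then observe that these components glue together uniquely up to isotopy. The paper's own proof simply asserts the uniqueness of the gluing in one sentence, so your more careful justification of the forced non-crossing matching across each $\beta_i$ is a welcome elaboration rather than a departure.
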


\begin{proof}
We can determine the number of loop, above and below components in each $S_i$ by Remark \ref{sumofloop} and Lemma \ref{abovebelow}; core and non-core loop, straight core, above and below  components in each $S'_i$ by Lemma \ref{abovebelow2} and Lemma \ref{abovebelow3} as illustrated in Example \ref{firstexample}. The components in each $S_i$ and $S'_i$  are glued together in a unique way up to isotopy, and hence $\cL$ is constructed uniquely.\end{proof}

\begin{remark}
The triangle coordinate function $\tau \colon \mathcal{L}_{k,n}\to (\mathbb{Z}_{\geq 0}^{3n+2k-4}\times \mathbb{Z}^k)\setminus\left\{0\right\}$  is not surjective: an integral lamination must satisfy the triangle inequality in each $S_i$ and $S'_i$, and some additional conditions such as the equalities in Lemma~\ref{equalities} and Lemma~\ref{equalities2}. 
\end{remark}
Next, we give a list of properties an integral lamination $\mathcal{L}\in \mathcal{L}_{k,n}$ satisfies in terms of its triangle coordinates as in \cite{paper2}, and then construct a new coordinate system from the triangle coordinates which describes integral laminations in a unique way. In particular, we shall generalize the Dynnikov coordinate system \cite{D02, wiest, or08, HY08,paper2, paper3, HY16} for $N_{k,n}$. 

\begin{properties}\label{properties}
Let $L$ be a taut representative of $\mathcal{L}\in \mathcal{L}_{k,n}$. 

\begin{enumerate}
\item[\textnormal{P1}.]  Every component of $L$ intersects each $\beta_i$ an even number of times. Recall from Remark \ref{sumofloop} that the number of loop components is given by  $|b_i|$ where $b_i=\frac{\beta_i-\beta_{i+1}}{2}$.

\begin{figure}[h!]
\begin{center}
\psfrag{m1}[tl]{$\scriptstyle{m_i+x_i}$} 
\psfrag{m}[tl]{$\scriptstyle{m_i}$} 
\psfrag{a1}[tl]{$\scriptstyle{\alpha_{2i-1}}$} 
\psfrag{a2}[tl]{$\scriptstyle{\alpha_{2i}}$} 
\psfrag{b1}[tl]{$\scriptstyle{\beta_i}$} 
\psfrag{b2}[tl]{$\scriptstyle{\beta_{i+1}}$} 
\psfrag{g}[tl]{$\scriptstyle{\gamma_i}$} 
\psfrag{r}[tl]{$\scriptstyle{n_i}$} 
\psfrag{tr}[tl]{$\scriptstyle{t_i+n_i}$} 
\psfrag{bi}[tl]{$\scriptstyle{|b_{i}|}$} 
\psfrag{b}[tl]{$\scriptstyle{|b_{n+i-1}|}$} 
\psfrag{f}[tl]{$\scriptstyle{\psi_i}$}
\psfrag{x}[tl]{$\scriptstyle{\beta_{n+i-1}}$} 
\psfrag{y}[tl]{$\scriptstyle{\beta_{n+i}}$}
\psfrag{2i-5}[tl]{$\scriptstyle{4}$}
\psfrag{2i-2}[tl]{$\scriptstyle{6}$}
\psfrag{2i-3}[tl]{$\scriptstyle{2}$}
\psfrag{2i-1}[tl]{$\scriptstyle{1}$}
\psfrag{2i-4}[tl]{$\scriptstyle{2}$}
\psfrag{g1}[tl]{$\scriptstyle{8}$} 
\includegraphics[width=0.55\textwidth]{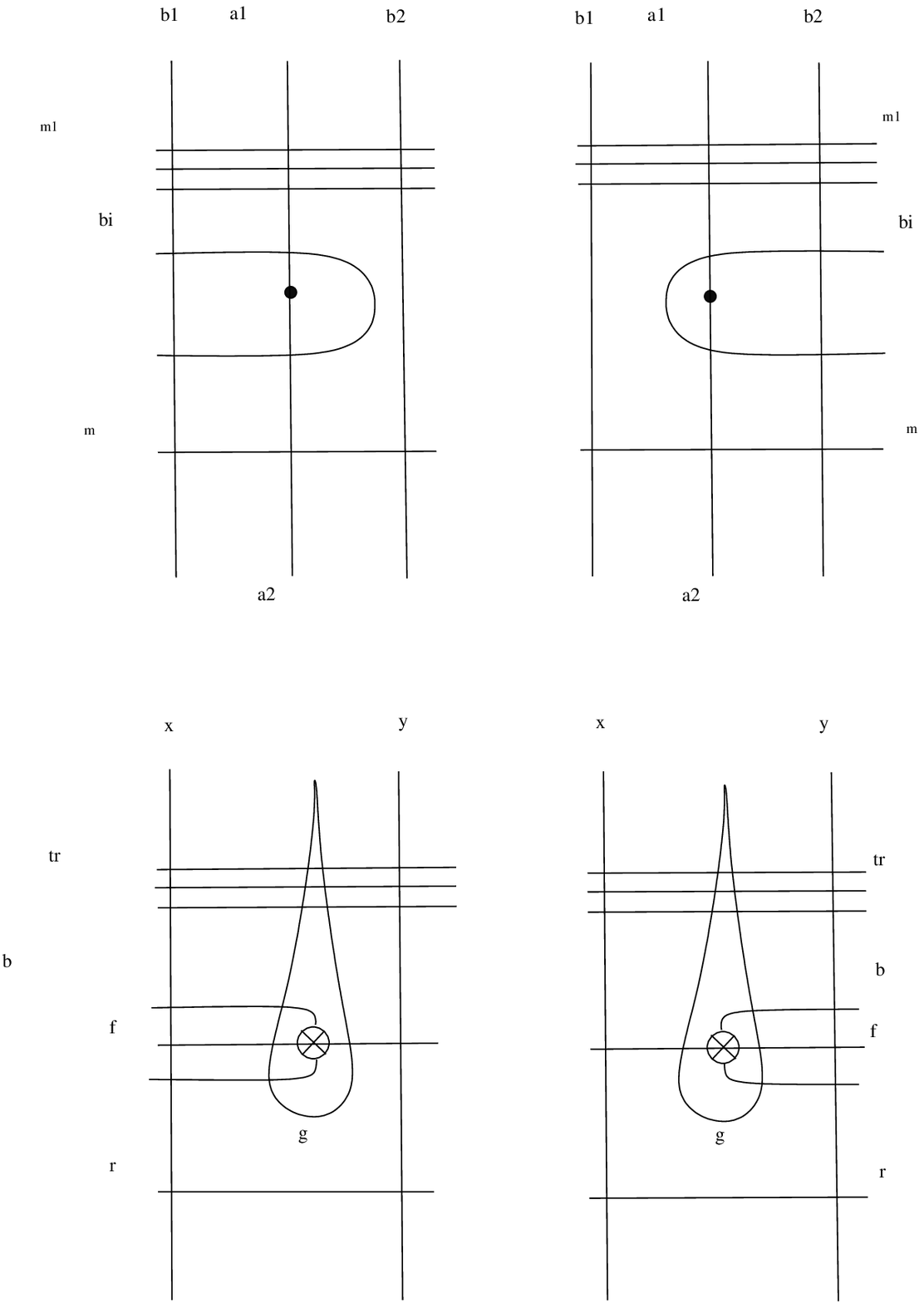}
\caption{$m_i$ and $n_i$ denote the smaller of above and below components in $S_i$ and $S'_i$ repectively}\label{relationII}
\end{center}
\end{figure}

\item[\textnormal{P2}.]  Set $x_i=|\alpha_{2i}-\alpha_{2i-1}|$ and $t_i=|a_{S'_i}-b_{S'_i}|$. Then $x_i$ and $t_i$ gives the difference between the number of above and below components in $S_i$ and $S'_i$ respectively. Set 
$m_i=\min\left\{\alpha_{2i}-|b_i|, \alpha_{2i-1}-|b_i|\right\}$; $1\leq i \leq  n-1$  and  $n_i= \min\left\{ a_{S'_i}, b_{S'_i} \right\}$; $1\leq i \leq k-1$.  See Figure \ref{relationII}. Note that $x_i$ is even since $L$ intersects $\alpha_{2i}\cup \alpha_{2i-1}$ an even number of times. Clearly, this may not hold for $t_i$ since when $\psi_i$ is odd the sum of above and below components (and hence their difference) is odd. See  Lemma~\ref{equalities2}.

\item[\textnormal{P3}.]   Set $2a_i=\alpha_{2i}-\alpha_{2i-1}$ $(|a_i|=x_i/2)$.  Then, by Lemma~\ref{equalities}  we get 

\begin{itemize}
\item If $b_i\geq 0$, then $\beta_i=\alpha_{2i} + \alpha_{2i-1}$ and hence
$$
\alpha_{2i}=a_i+\frac{\beta_i}{2}  \ \textrm{and} \  \alpha_{2i-1}=-a_i+\frac{\beta_i}{2}. 
$$

\item If $b_i \leq 0$, then $\beta_{i+1}=\alpha_{2i} + \alpha_{2i-1}$  and hence
 $$
\alpha_{2i}=a_i+\frac{\beta_{i+1}}{2}  \ \textrm{and} \  \alpha_{2i-1}=-a_i+\frac{\beta_{i+1}}{2}.
$$
\end{itemize}

That is,

\begin{align*}
\displaystyle
\alpha_i = \left\{     
\begin{array}{lr}
(-1)^i a_{\lceil i/2 \rceil}+\frac{\beta_{\lceil i/2\rceil}}{2}& \mbox{if $b_{\lceil i/2\rceil} \geq 0$}, \\
(-1)^i a_{\lceil i/2\rceil}+\frac{\beta_{1+\lceil i/2\rceil}}{2}& \mbox{if $b_{\lceil i/2\rceil} \leq 0$}.  
\end{array}
\right.\\
\end{align*}
where $\lceil i/2 \rceil$ denotes the smallest integer that is not less than $i/2$.

\item[\textnormal{P4}.]    Since $t_i=a_{S'_i}-b_{S'_i}$ for $1\leq i \leq k-1$,  from Lemma \ref{equalities2} we get

\begin{itemize}

\item  If $b_{n+i-1}\geq 0$ then $a_{S'_i}+b_{S'_i}+\psi_{i}+2b_{n+i-1}=\beta_{n+i-1}$, and 
$$
a_{S'_i}=\frac{t_i-\psi_i+\beta_{n+i-1}-2b_{n+i-1}}{2}
$$

\item  If $b_{n+i-1}\leq 0$ then $a_{S'_i}+b_{S'_i}+\psi_{i}-2b_{n+i-1}=\beta_{n+i}$, and 
$$
a_{S'_i}=\frac{t_i-\psi_{i}+\beta_{n+i}+2b_{n+i-1}}{2}
$$

And hence 
$$a_{S'_i} =\frac{t_i-\psi_{i}+\max(\beta_{n+i}, \beta_{n+i-1})-2|b_{n+i-1}|}{2}
$$ 
Similarly we compute $$b_{S'_i} =\frac{-t_i-\psi_{i}+\max(\beta_{n+i}, \beta_{n+i-1})-2|b_{n+i-1}|}{2}$$
\end{itemize}
\item[\textnormal{P5}.]   It is easy to observe from Figure \ref{relationII} that

\begin{align*}
\beta_i&=2\left[\left|a_i \right|+\max(b_i,0)+m_i\right] \quad \text{for}\quad 1\leq i \leq n-1\\
\beta_{n+i}&=|t_i|+2\max(b_{n+i-1}, 0)+\psi_{i}+2n_i \quad \text{for}\quad 1\leq i \leq k-1.
\end{align*}

\noindent Therefore, since $b_i=\frac{\beta_{i}-\beta_{i+1}}{2}$; $1\leq i \leq n+k-2$ we can compute $\beta_1$ using one of the two equations below:

\begin{align*}
\beta_1&=2\left[\left|a_i \right|+\max(b_i, 0)+m_i+\displaystyle\sum^{i-1}_{j=1}b_j \right]\quad  \text{for}\quad 1\leq i \leq n-1,\\
\beta_{1}&=|t_i|+2\max(b_{n+i-1}, 0)+\psi_{i}+2n_i +2\sum_{j=1}^{n+i-2}b_j\quad \text{for}\quad 1\leq i \leq k-1.
\end{align*} 

 \vspace{-0.5cm}

\begin{figure}[h!] 
\begin{center}
\psfrag{u}[tl]{\tiny{u-component}} 
\psfrag{r}[tl]{$\scriptstyle{m_i}$} 
\psfrag{tr}[tl]{$\scriptstyle{|2a_i|+m_i}$} 
\psfrag{b}[tl]{$\scriptstyle{|b_{i}|}$} 
\psfrag{f}[tl]{$\scriptstyle{\psi_i}$}
\psfrag{x}[tl]{$\scriptstyle{\beta_{i-1}}$} 
\psfrag{y}[tl]{$\scriptstyle{\beta_{i}}$}
\includegraphics[width=0.6\textwidth]{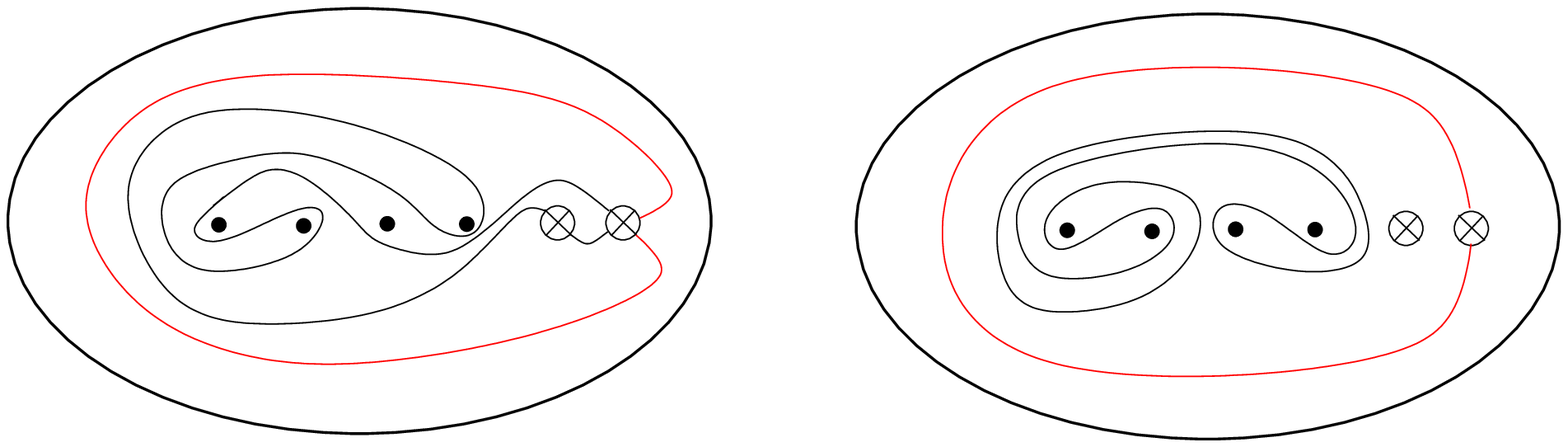}
\caption{$L^{\ast}$ is a simple closed curve on the right but it is not on the left.}\label{ucomponent1}
\end{center}
\end{figure}

\item[\textnormal{P6}.]   Some integral laminations contain $R$-components: an $R$-component of $L$ has geometric intersection numbers $i(R,\alpha_j)=1$ for each $1\leq j \leq 2n-2$,  $i(R,\beta_j)=2$ for each $1\leq j \leq n+k-1$ and $i(R,\gamma_j)=2$  for each $1\leq j \leq k-1$, which has its end points on the $k$-th crosscap (denoted red in Figure \ref{ucomponent1}). Set $L^{\ast}=L\setminus R$.  Note that $L^{\ast}$ is a component of $L$ which isn't necessarily a simple closed curve (the two possible cases are depicted in Figure \ref{ucomponent1}). Let $\alpha_i^{\ast}, \beta_i^{\ast}$ and $ \gamma_i^{\ast}$ denote the number of intersections of $L^{\ast}$ with the arcs $\alpha_i, \beta_i $ and $\gamma_i$ respectively. Define $a_i^{\ast}, b_i^{\ast},t_i^{\ast}$  and $\lambda^{\ast}_{i}$, $\lambda^{\ast}_{c_i}$, $a^{\ast}_{S'}$, $b^{\ast}_{S'}$ and $\psi^{\ast}_i$ similarly as above. We therefore have

\begin{align*}
\beta^{\ast}_1&=2\left[\left|a^{\ast}_i \right|+\max(b^{\ast}_i, 0)+m^{\ast}_i+\displaystyle\sum^{i-1}_{j=1}b^{\ast}_j \right]\quad  \text{for}\quad 1\leq i \leq n-1,\\
\beta^{\ast}_{1}&=|t^{\ast}_i|+2\max(b^{\ast}_{n+i-1}, 0)+\psi^{\ast}_{i}+2n^{\ast}_i +2\sum_{j=1}^{n+i-2}b^{\ast}_j\quad \text{for}\quad 1\leq i \leq k-1.
\end{align*} 

\noindent where $m^{\ast}_i=\min\left\{\alpha^{\ast}_{2i}-|b^{\ast}_i|, \alpha^{\ast}_{2i-1}-|b^{\ast}_i|\right\}$; $1\leq i \leq n-1$  and  $n^{\ast}_i= \min\left\{ a^{\ast}_{S'_i}, b^{\ast}_{S'_i} \right\}$;  $1\leq i \leq k-1$. Furthermore,  there is some $m^{\ast}_i=0$, or some $n^{\ast}_i=0$ since otherwise $L^{\ast}$ would have above and below components in each $S_i$ and $S'_i$ which would yield curves parallel to $\partial{N_{k,n}}$,  or $L^{\ast}$ would contain $R$-components which is impossible by definition.  Write $a_i^{\ast}=a_i, b_i^{\ast}=b_i,t_i^{\ast}=t_i$ since deleting  $R$-components doesn't change the $a,b,t$ values. Set

\begin{align*}
X_i&=2\left[\left|a_i \right|+\max(b_i, 0)+\displaystyle\sum^{i-1}_{j=1}b_j \right]\quad  \text{for}\quad 1\leq i \leq n-1,\\
Y_i&=|t_i|+2\max(b_{n+i-1}, 0)+\psi_{i}+2\sum_{j=1}^{n+i-2}b_j\quad \text{for}\quad 1\leq i \leq k-1.
\end{align*}  

\vspace{1mm}
Then one of the three following cases hold for $L^{\ast}$:
\vspace{2mm}

 \begin{enumerate}[\textnormal{I}.]
\item If $m^{\ast}_i> 0$ for all  $ 1\leq i \leq n-1$, then there is some $j$ with $1\leq j \leq k-1$ such that $n^{\ast}_j=0$.  Therefore, $\beta^{\ast}_1>X_i$ and $\beta^{\ast}_1=Y_j$. 
 \end{enumerate}
 \begin{enumerate}[\textnormal{II}.]
\item  If $n^{\ast}_i> 0$  for all  $1\leq i \leq k-1$, then there is some $j$ with $1\leq j \leq n-1$ such that $m^{\ast}_j=0$. Therefore, $\beta^{\ast}_1>Y_i$ and  $\beta^{\ast}_1=X_j$. 
 \end{enumerate}
 \begin{enumerate}[\textnormal{III}.]
\item There is some $i$ with $1\leq i \leq n-1$ such that $m^{\ast}_i=0$ and  some $j$ with $1\leq j \leq k-1$ such that 
$n^{\ast}_j=0$. Therefore, $\beta^{\ast}_1=X_i=Y_j$. 
\end{enumerate}
 We therefore have 

\begin{align*}
\beta^{\ast}_i=\max(X,Y)-2\sum_{j=1}^{i-1}b_j
\end{align*}
\noindent where $$
\displaystyle 
X=2\max_{1\leq r\leq n-1}\left\{|a_r|+\max(b_r,0)+\sum_{j=1}^{r-1}b_j\right\} 
$$
and  
$$
\displaystyle
Y=\max_{1\leq s\leq k-1}\left\{|t_s|+2\max(b_{n+s-1}, 0)+\psi_{s}+2\sum_{j=1}^{n+s-2}b_j\right\}. 
$$

\item[\textnormal{P7}.]   If $L$ doesn't have an $R$-component, that is if $L^{\ast}=L$ then $2c_k\leq \beta^{\ast}_{n+k-1}=\beta_{n+k-1}$ since $\beta_{n+k-1}=2c_k+2\lambda_k$.    If $L$ has an $R$-component then $2c_k>\beta^{\ast}_{n+k-1}$ and $\lambda_k=0$.  See  Figure \ref{ucomponent1}. Hence the number of $R$-components of $L$ is given by \begin{align*}R=\max(0,2c_k-\beta^{\ast}_{n+k-1})/2.\end{align*} 
For example, the integral laminations in Figure \ref{ucomponent1} (from left to right) has $c_1=2, \beta^{\ast}_5=2$, and hence $R=1$; and $c_1=1, \beta^{\ast}_5=0$, and hence $R=1$.  Then $L$ is constructed by identifying the two end points of an $R$ component with the pieces of $L^{\ast}$ on the $k$-th crosscap.  Since $R$-components intersect each $\beta_i$  twice we get 

\begin{align*}
\beta_i=\beta^{\ast}_i+2R; 1\leq i \leq n+k-1.
\end{align*}
Then

\begin{align*}
\beta_i &=\max(X,Y)-2\sum_{j=1}^{i-1}b_j+2R                     
\end{align*}

Also, from item P$3.$ we have

\begin{align*}
 \alpha_i     &= \left\{     
\begin{array}{lr}
(-1)^i a_{\lceil i/2 \rceil}+\frac{\beta_{\lceil i/2\rceil}}{2}& \mbox{if $b_{\lceil i/2\rceil} \geq 0$,} \\
(-1)^i a_{\lceil i/2\rceil}+\frac{\beta_{1+\lceil i/2\rceil}}{2}& \mbox{if $b_{\lceil i/2\rceil} \leq 0$,}   
\end{array}
 \right.
 \end{align*}
\noindent Finally, it is easy to observe from Figure \ref{pathcomponentsproof2} that 

\begin{align*} \gamma_i=2(a_{S'_i}+|b_{n+i-1}|+\psi_{i})\end{align*} 
\end{enumerate}

 \end{properties}

Making use of the properties above, we shall define the generalized Dynnikov coordinate system which coordinatizes $\mathcal{L}_{k,n}$ bijectively and with the least number of coordinates. 

\begin{definition}\label{dynnikovcoordinates}
The generalized Dynnikov coordinate function 
$$
\rho \colon \cL_{k,n}\to(\mathbb{Z}^{2(n+k-2)}\times \mathbb{Z}^k)\setminus \left\{0\right\}
$$ 
is defined by
$$
\displaystyle
\rho(\cL) = (a; b; t; c):=(a_1,\ldots,a_{n-1}; b_1, \ldots,b_{n+k-2}; t_1, \ldots,t_{k-1}; c_1, \ldots, c_k)$$
\noindent where 
\begin{align*}
\displaystyle
a_i&=\frac{\alpha_{2i}-\alpha_{2i-1}}{2} &&\textrm{for} \ 1\leq i\leq n-1,\\
\displaystyle
b_i&=\frac{\beta_i-\beta_{i+1}}{2} &&\textrm{for} \ 1\leq i\leq n+k-2,\\
\displaystyle
t_i&=a_{S'_i}-b_{S'_ i} &&\textrm{for} \ 1\leq i\leq k-1,
\end{align*}
where $a_{S'_i}$ and $b_{S'_i}$ are as given in Lemma \ref{abovebelow3}.

\end{definition}

Theorem \ref{thm:dynninvert} gives the inverse of $\rho \colon \cL_{k,n}\to(\mathbb{Z}^{2(n+k-2)}\times \mathbb{Z}^k)\setminus \left\{0\right\}$.

\begin{theorem}\label{thm:dynninvert}
Let $(a; b; t; c)\in(\mathbb{Z}^{2(n+k-2)}\times \mathbb{Z}^k)\setminus \left\{0\right\}$. Set 

\vspace{-0.5cm}

\begin{align*}
X&=2\max_{1\leq r\leq n-1}\left\{|a_r|+\max(b_r,0)+\sum_{j=1}^{r-1}b_j\right\}\\
Y&=\max_{1\leq s\leq k-1}\left\{|t_s|+2\max(b_{n+s-1}, 0)+\psi_{s}+2\sum_{j=1}^{n+s-2}b_j\right\}. \end{align*} 
Then $(a; b; t; c)$ is the Dynnikov coordinate of exactly one element $\cL \in \cL_{k, n}$ which has 
\vspace{-0.2cm}
\begin{align}
\displaystyle \beta_i      &=\max(X,Y)-2\sum_{j=1}^{i-1}b_j+2R,   \\                                                   
\displaystyle \alpha_i     &= \left\{     
\begin{array}{lr}
(-1)^i a_{\lceil i/2 \rceil}+\frac{\beta_{\lceil i/2\rceil}}{2}& \mbox{if $b_{\lceil i/2\rceil} \geq 0$,} \\
(-1)^i a_{\lceil i/2\rceil}+\frac{\beta_{1+\lceil i/2\rceil}}{2}& \mbox{if $b_{\lceil i/2\rceil} \leq 0$,}   
\end{array}
 \right.\\
 \gamma_i&=2(a_{S'_i}+|b_{n+i-1}|+\psi_{i}) 
 \end{align}
\noindent where $a_{S'_i}$ is defined as in item P$4.$ in Properties \ref{properties}.

 \end{theorem}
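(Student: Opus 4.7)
The plan is to factor everything through the triangle coordinate map $\tau$. By Lemma~\ref{tauinject}, $\tau$ is already injective, so it suffices to show that (i) the formulas in the statement recover from any $(a;b;t;c)$ a genuine point in the image of $\tau$, and (ii) applying $\rho$ to the lamination produced by those triangle coordinates returns $(a;b;t;c)$. Bijectivity of $\rho$ then follows immediately.

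For injectivity, Properties P1--P7 were set up precisely so that every entry of $\tau(\cL)$ is visibly a function of $\rho(\cL)$. Specifically, $c_i$ is already a Dynnikov coordinate, and $\psi_i$ is determined from $c_i$ and $b_{n+i-1}$ via Lemma~\ref{abovebelow2}; the quantities $X$ and $Y$ are the two expressions for $\beta^{\ast}_1$ obtained in P5, applied to $L^{\ast}=L\setminus R$, where the minima $m^{\ast}_i$ and $n^{\ast}_j$ must vanish for at least one index in their respective families, so $\max(X,Y)=\beta^{\ast}_1$; the number $R$ of $R$-components is then recovered from P7. From here $\beta_i=\beta^{\ast}_i+2R$, each $\alpha_i$ is given by P3, and $\gamma_i=2(a_{S'_i}+|b_{n+i-1}|+\psi_i)$ with $a_{S'_i}$ taken from item P4. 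Thus $\rho$ determines $\tau$, proving injectivity.

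For surjectivity, I start with an arbitrary $(a;b;t;c)\in(\Z^{2(n+k-2)}\times\Z^k)\setminus\{0\}$, set $c_i^+=\max(c_i,0)$ and $\psi_i=\max(c_i^+-|b_{n+i-1}|,0)$, then compute $X,Y,R$ and define $\beta_i,\alpha_i,\gamma_i$ by the displayed formulas. I would verify that the resulting $\alpha_i,\beta_i,\gamma_i$ are non-negative integers satisfying the linear relations of Lemmas~\ref{equalities} and~\ref{equalities2}: non-negativity follows because $X$ and $Y$ each dominate every term in their own maxima, which forces $\beta_i\geq 2|b_i|$ as well as the analogous bound $\gamma_i/2\leq\max(\beta_{n+i-1},\beta_{n+i})-|b_{n+i-1}|$, and the linear relations are immediate algebraic identities. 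Building $\cL$ by gluing the prescribed above, below, loop, core-loop, and straight-core components in each $S_i$ and $S'_i$ as in the proof of Lemma~\ref{tauinject}, and attaching $R$ copies of the $R$-component on the $k$-th crosscap, yields a lamination whose triangle coordinates are exactly those just computed; a direct calculation from Definition~\ref{dynnikovcoordinates} then confirms $\rho(\cL)=(a;b;t;c)$.

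The main obstacle is the case analysis around the $R$-components and the identification of $\beta^{\ast}_1$ with $\max(X,Y)$. The argument in P6 splits into three cases depending on whether the vanishing minimum lies among the $m^{\ast}_i$, among the $n^{\ast}_j$, or in both families, and one must check that the formulas produce consistent data in every case. Intimately linked with this is the verification that $\beta_i-2|b_i|\geq 0$ and that the parenthesised quantities defining $\alpha_i$ and $a_{S'_i}$ are non-negative; these inequalities hold precisely because $X$ and $Y$ are built as maxima over all admissible indices, and tracking this dependency is the technical heart of the argument.
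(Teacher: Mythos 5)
Your proposal is correct and follows essentially the same route as the paper: injectivity of $\rho$ by showing (via Properties \ref{properties}) that the triangle coordinates are functions of the Dynnikov coordinates and then invoking Lemma \ref{tauinject}, and surjectivity by constructing the path components in each $S_i$, $S'_i$, $\Delta_0$, $\Delta'_k$ from the computed $(\alpha,\beta,\gamma,c)$ and gluing. The only difference is that you spell out the verifications (non-negativity, the identification $\max(X,Y)=\beta^{\ast}_1$ through the three cases of P6, recovery of $R$ from P7) that the paper's proof leaves implicit as consequences of Properties \ref{properties}.
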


\begin{proof}

Given $L\in \mathcal{L}_{k,n}$ with $\tau(L)=(\alpha,\beta,\gamma, c)$ and $\rho(L)=(a,b,t,c)$, Properties \ref{properties} show that $\alpha,\beta$ and $\gamma$ must be given by (2), (3) and (4) respectively, and hence  $L$ is unique by Lemma \ref{tauinject}. Therefore $\rho$ is injective.  By Properties \ref{properties} we can draw non-intersecting path components in each $S_i$ ($1\leq i\leq n-1$), $S'_i$ ($1\leq i\leq k-1$), $\Delta_0$ and $\Delta'_k$ which intersect each element of $\mathcal{A}_{k,n}$ the number of times given by $(\alpha,\beta,\gamma, c)$. Gluing together these path components gives a disjoint union of simple closed curves in $N_{k,n}$. There are no curves that bound a puncture or parallel to the boundary by construction, and hence $(\alpha,\beta,\gamma, c)$ where $\alpha,\beta$ and $\gamma$ are defined by (2), (3) and (4) respectively, correspond to some $L$ with $\rho(L)=(a,b,t,c)$. Therefore, $\rho$ is surjective.\end{proof}

\begin{example}\label{secondexample}
Let $\rho(\cL)=(a_1; b_1, b_2; t_1; c_1, c_2)=(-1; 2, 0; 1; 1, 0)$ be the generalized Dynnikov coordinates of an integral lamination $\cL$ on $N_{2, 2}$.  We shall use Theorem \ref{thm:dynninvert} to compute the triangle coordinates of $\cL$ from which we determine the number of path components in $S_1$ and $S'_1$, and hence draw $\cL$ as illustrated in Example \ref{firstexample}. By Lemma \ref {abovebelow2}, $\psi_{1}=\max(c_1^+-|b_{2}|, 0)=1$ so we have                         
\begin{align*}
X=2(|a_1|+\max(b_1, 0))=6  \quad \text{and} \quad  Y= |t_1|+2\max(b_2, 0)+\psi_{1}+2b_1=6.
\end{align*}
Therefore
\begin{align*}
\beta_1&=\max(6, 6)=6, \ \beta_2=\max(6, 6)-2b_1=2, \ \beta_3=\max(6, 6)-2(b_1+b_2)=2, \\
\alpha_1&= -a_1+\frac{\beta_1}{2}=4, \ \alpha_2=a_1+\frac{\beta_1}{2}=2.
 \end{align*}
Since  $0=2c_2 < \beta^{\ast}_3=2$, there are no $R$-components by item P$8.$ of Properties \ref{properties}. Since $\beta_1=6$ there are 3 loop components in $\Delta_0$, and since $\beta_3=2$ and $c_2=0$, there is one non-core loop component in $\Delta'_2$ that is $\lambda_2=1$. By Remarks \ref{sumofloop}, $b_1=2$ and $b_2=0$, and hence there are 2 right loop components in $S_1$ and no loop components in $S'_1$. By Lemma \ref{abovebelow} we compute that $a_{S_1}=\alpha_{1}-|b_1|=2$ and $b_{S_1}=\alpha_{2}-|b_1|=0$. Finally by item P$4.$ of Properties \ref{properties},
\begin{align*}
 a_{S'_1}&=\frac{t_1-\psi_{1}+\max(\beta_2, \beta_3)-2|b_2|}{2}=1\\
  b_{S'_1} &=\frac{-t_1-\psi_{1}+\max(\beta_{2}, \beta_{3})-2|b_{2}|}{2}=0
\end{align*}
Gluing together the path components in $S_1$ and $S'_1$ we construct the integral lamination depicted in Figure \ref{construct}.

\begin{figure}[h!]
\begin{center}
\psfrag{1}[tl]{$\scriptstyle{1}$} 
\psfrag{2}[tl]{$\scriptstyle{2}$} 
\psfrag{4}[tl]{$\scriptstyle{4}$} 
\psfrag{10}[tl]{$\scriptstyle{10}$}
\psfrag{6}[tl]{$\scriptstyle{6}$} 
\psfrag{8}[tl]{$\scriptstyle{8}$}
\includegraphics[width=0.32\textwidth]{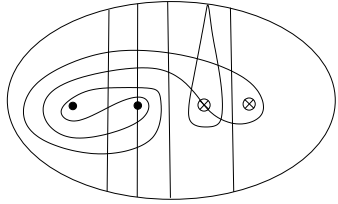}
\caption{$\rho(L)=(-1; 2, 0; 1; 1, 0)$}\label{construct}
\end{center}
\end{figure}
\end{example}

\begin{remark}
 
Generalized Dynnikov coordinates for integral laminations can be extended in a natural way to generalized Dynnikov coordinates of measured foliations \cite{HY08}: the transverse measure on the foliation \cite{W88, FLP79, pepa} assigns to each element in $\mathcal{A}_{k,n}$ a non-negative real number, and hence each measured foliation is described by an element of $(\mathbb{R}_{\geq 0}^{3n+2k-4}\times \mathbb{R}^k)\setminus\left\{0\right\}$, the associated measures of the arcs and curves of $\mathcal{A}_{k,n}$. Therefore, the \emph{Generalized Dynnikov coordinate system} for measured foliations is defined similarly (see Definition \ref{dynnikovcoordinates}), and provides a one-to-one correspondence between the set of measured foliations (up to isotopy and Whitehead equivalence) on $N_{k,n}$ and $(\mathbb{R}^{2(n+k-2)}\times \mathbb{R}^k)\setminus \left\{0\right\}$.

\end{remark}


\end{document}